\theoremstyle{plain}
\newtheorem{corollary}{Corollary}
\newtheorem{lemma}{Lemma}
\newtheorem{problem}{Problem}
\newtheorem{proposition}{Proposition}
\newtheorem{remark}{Remark}
\newtheorem{theorem}{Theorem}
\numberwithin{equation}{section}
\newcommand{\sgn}{{\rm sgn}}
\newcommand{\ls}{\lesssim}
\begin{document}

\title[]{Sharp endpoint   estimates for eigenfunctions restricted to submanifolds of codimension 2}
\author{Xing Wang and Cheng Zhang}

\address{Department of Mathematics\\
Wayne State University\\
Detroit, MI 48202, USA}
\email{xing.wang@wayne.edu}
\address{Department of Mathematics\\
University of Rochester\\
Rochester, NY 14627, USA}
\email{czhang77@ur.rochester.edu}
\date{}
\keywords{Eigenfunctions; Oscillatory integrals; Hilbert transform}
\begin{abstract}Burq-G\'erard-Tzvetkov \cite{burq} and Hu \cite{hu} established $L^p$ estimates ($2\le p\le \infty$) for the restriction of eigenfunctions to submanifolds. The estimates are sharp, except for the log loss at the endpoint $L^2$ estimates for submanifolds of codimension 2. It has long been believed that the log loss at the endpoint can be removed in general, while the problem is still open. So this paper is devoted to the study of sharp endpoint restriction estimates for eigenfunctions in this case. Chen and Sogge \cite{chensogge} removed the log loss for the geodesics on 3-dimensional manifolds. In this paper, we generalize their result to higher dimensions and prove that the log loss can be removed for totally geodesic submanifolds of codimension 2. Moreover, on 3-dimensional manifolds, we can remove the log loss for curves with nonvanishing geodesic curvatures, and more general finite type curves. The problem in 3D is essentially related to Hilbert transforms along curves in the plane and a class of singular oscillatory integrals studied by Phong-Stein \cite{ps86}, Ricci-Stein \cite{rs87}, Pan \cite{pan91}, Seeger \cite{seeger94}, Carbery-P\'erez \cite{cp99}.
\end{abstract}

\maketitle

\section{Introduction}
Let $(M,g)$ be a compact smooth $n$-dimensional Riemannian manifold and let $\Delta_g$ be the associated Laplace-Beltrami operator. Let $e_\lambda$ denote the $L^2$-normalized eigenfunction
\[-\Delta_g e_\lambda=\lambda^2e_\lambda,\]
so that $\lambda\ge0$ is the eigenvalue of the operator $\sqrt{-\Delta_g}.$

One of the main topics regarding eigenfunctions is to measure their concentration. There are several common ways to do this. The first way is by describing semi-classical (Weigner) measures, see the works by Shnirelman \cite{sh1974}, Zelditch \cite{z1987}, Colin de Verdi\`ere \cite{de1985}, G\'erard-Leichtnam \cite{ge1993}, Zelditch-Zworski \cite{zz1996}, Helffer-Martinez-Robert \cite{he1987}, Sarnak \cite{sa1995}, Lindenstrauss \cite{lin2006} and Anantharaman \cite{ana2004}. The second way is by considering the growth of the $L^p$ norms of eigenfunctions, see the works by Sogge \cite{slp, fio}, Sogge-Zelditch \cite{sz2002}, Burq-G\'erard-Tzvetkov \cite{bgt2005, bgt2004, bgt20052}, Hassell-Tacy \cite{hstc}, Hazari-Rivi\`ere\cite{heri}, Blair-Sogge \cite{bs2019}. The third way is by measuring its growth of the $L^p$ norm over some local domains, specifically, geodesic balls or tubes along geodesics, see the works by Sogge \cite{skn, sogge2016}, Blair-Sogge \cite{bs1, bs2, bs3, bs4}, Han \cite{han2015}, Hezari and Rivi\'ere \cite{he2016}. The fourth way is by considering the growth of $L^p$ norms and period integrals of eigenfunctions restricted to submanifolds, see the works by Burq-G\'erard-Tzvetkov \cite{burq}, Hu \cite{hu}, Chen \cite{chen}, Chen-Sogge \cite{chensogge}, Xi-Zhang \cite{xz}, Hezari \cite{hez}, Blair \cite{mb}, Zhang \cite{zhang}, Huang-Zhang \cite{hz2019}, Reznikov \cite{rez2015}, Sogge-Xi-Zhang \cite{sxz2017}, Canzani-Galkowski-Toth \cite{cgt2018}, Wyman \cite{wym2019}. In this paper, we study the concentration of eigenfunctions in the fourth way.

We first review the previous results. Burq-G\'erard-Tzvetkov \cite{burq} obtained the following $L^p$ estimates for eigenfunctions restricted to submanifolds. See also the works by Greenleaf-Seeger \cite{gs1994} , Tataru \cite{tata1998}, Reznikov \cite{rez} for earlier related results.

\begin{theorem}
Let ($M,g$) be a compact smooth Riemannian manifold of dimension n, and let $\Sigma$ be a smooth submanifold of dimension $k$. There exists a constant $C>0$ such that for any
$e_\lambda$, we have

           \begin{center}
           $\|e_\lambda\|_{L^p(\Sigma)}\leq C(1+\lambda)^{\rho(k,n)}\|e_\lambda\|_{L^2(M)}$
           \end{center}
           where

           \begin{equation}
          \rho(n-1,n)= \left\{ \begin{array}{l}
\frac{n-1}{2}-\frac{n-1}{p} \quad \text{if   $p_0=\frac{2n}{n-1}<p\leq+\infty$} \\
\frac{n-1}{4}-\frac{n-2}{2p} \quad \text{if   $2\leq p<p_0=\frac{2n}{n-1}$}  \\
\end{array}
\right.
          \end{equation}

          \begin{equation}
           \rho(n-2,n)=\frac{n-1}{2}-\frac{n-2}{p} \quad \text{if   $2<p\leq+\infty$}
           \end{equation}
           \begin{equation}
           \rho(k,n)=\frac{n-1}{2}-\frac{k}{p} \quad \text{if   $1\leq p\leq n-3$}.
        \end{equation}
           If $p=p_0=\frac{2n}{n-1}$ and $k=n-1$, we have
           \begin{equation}
           \|e_\lambda\|_{L^p(\Sigma)}\leq C(1+\lambda)^{\frac{n-1}{2n}} \sqrt{\log\lambda} \|e_\lambda\|_{L^2(M)}
           \end{equation}
           and if $p=2$ and $k=n-2$, we have
          \begin{equation}\label{cod2}
            \|e_\lambda\|_{L^p(\Sigma)}\leq C(1+\lambda)^{\frac{1}{2}} \sqrt{\log\lambda} \|e_\lambda\|_{L^2(M)}.
           \end{equation}
\end{theorem}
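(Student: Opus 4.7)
The plan is to follow the spectral cluster method of Sogge combined with H\"ormander's half‑wave parametrix. Set $P=\sqrt{-\Delta_g}$ and pick $\chi\in\mathcal{S}(\mathbb{R})$ with $\chi(0)=1$ and $\widehat\chi$ supported in $(-\delta,\delta)$, where $\delta>0$ is less than the injectivity radius of $M$. Since $\chi(\lambda-P)\varphi_\lambda=\varphi_\lambda$, the theorem reduces to the uniform operator bound $\|\chi(\lambda-P)\|_{L^2(M)\to L^p(\Sigma)}\le C\lambda^{\rho(k,n)}$, which by $TT^*$ duality is equivalent to a norm estimate on $L^{p'}(\Sigma)\to L^p(\Sigma)$ for the integral operator whose Schwartz kernel is $K_\lambda(x,y)=|\chi|^2(\lambda-P)(x,y)|_{\Sigma\times\Sigma}$.

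\textbf{Parametrix and kernel bounds.} Writing
\[
|\chi|^2(\lambda-P)=\frac{1}{2\pi}\int \widehat{|\chi|^2}(t)\,e^{-it\lambda}\,e^{itP}\,dt
\]
and using H\"ormander's parametrix $e^{itP}(x,y)=\int e^{i\phi(t,x,y,\xi)}a(t,x,y,\xi)\,d\xi$ on the support of $\widehat{|\chi|^2}$, I pass to polar coordinates $\xi=r\omega$ and perform stationary phase in $(t,r)$ at $r\sim\lambda$ on the characteristic set. This produces the WKB representation
\[
K_\lambda(x,y)=\lambda^{(n-1)/2}\bigl(e^{i\lambda d_g(x,y)}a_\lambda(x,y)+\overline{e^{i\lambda d_g(x,y)}a_\lambda(x,y)}\bigr)+O(\lambda^{-N})
\]
for $d_g(x,y)\ge c\lambda^{-1}$, with symbol bounds $|\partial^\alpha a_\lambda|\le C_\alpha d_g(x,y)^{-(n-1)/2-|\alpha|}$, together with the trivial diagonal bound $|K_\lambda|\le C\lambda^{n-1}$.

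\textbf{Integration, cases, and endpoint losses.} In Fermi coordinates on $\Sigma$ around $x$, the volume element satisfies $d\sigma_\Sigma\approx r^{k-1}\,dr\,d\omega$ with $r=d_g(x,y)$. Substituting the pointwise bound into Schur/Young estimates for $TT^*$ and optimizing the appropriate $L^s$ norm of $K_\lambda(x,\cdot)$ reproduces the three exponents in the statement: for $1\le k\le n-3$ the bulk contribution dominates and the linear formula $\rho(k,n)=\tfrac{n-1}{2}-\tfrac{k}{p}$ follows by interpolation with the trivial $L^\infty$ bound; for $k=n-2$, $p>2$ the pointwise estimate still suffices; and for $k=n-1$, $p>p_0$ one must exploit the oscillation of $K_\lambda$ via a Carleson--Sj\"olin / Phong--Stein argument using the nondegenerate rotational curvature of $d_g(x,\cdot)|_\Sigma$, obtaining $\rho=\tfrac{n-1}{2}-\tfrac{n-1}{p}$; the remaining range $2\le p<p_0$ is then recovered by interpolating the $p=p_0$ bound with Sogge's $L^2$‑restriction estimate $\|\varphi_\lambda\|_{L^2(\Sigma)}\lesssim\lambda^{1/4}$. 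The two endpoints $(k,p)=(n-1,p_0)$ and $(k,p)=(n-2,2)$ are precisely where the integrand becomes $r^{-1}$, producing a $\log\lambda$ and hence the $\log^{1/2}\lambda$ loss after the square root inherent in $TT^*$. I expect the main difficulty to lie at the $L^p$ endpoint $(n-1,p_0)$, where combining Stein's complex interpolation with a dyadic oscillatory decomposition of $K_\lambda$ and tracking the exact logarithmic gain is delicate; by contrast the codimension‑two endpoint $(n-2,2)$ is softer and follows either from a direct Schur estimate (sharp only when $n=3$) or from an oscillatory/Plancherel argument on the normal $S^1$ after stationary phase in the tangential directions.
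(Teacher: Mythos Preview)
The paper does not prove this statement. Theorem~1 is quoted from Burq--G\'erard--Tzvetkov purely as background; the paper's own contribution, and the only result it actually proves, is Theorem~2 (the removal of the $\log^{1/2}\lambda$ loss at $(k,p)=(n-2,2)$ for totally geodesic $\Sigma$). There is therefore no proof in this paper to compare your proposal against.

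That said, your sketch is a recognisable outline of the standard spectral-cluster/$TT^*$ argument, and your diagnosis of the two endpoint losses as arising from an $r^{-1}$ integrand (hence a $\log\lambda$ factor before taking the square root) is the correct mechanism. The machinery this paper does deploy, in proving Theorem~2, is closely related but not identical to yours: it uses the $\cos(t\sqrt{-\Delta_g})$ propagator expanded via the Hadamard parametrix (Lemma~1 here) together with the oscillatory-kernel $L^2\to L^2$ bound of Lemma~2 (which is Proposition~6.3 of the Burq--G\'erard--Tzvetkov paper), rather than the half-wave H\"ormander FIO parametrix $e^{itP}$ that you invoke. For the purposes of Theorem~1 these two parametrix choices are essentially interchangeable.
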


These estimates are sharp, except for the two cases: $(n,k,p)=(n,n-1,\frac{2n}{n-1})$ and $(n,k,p)=(n,n-2,2)$, which have log loss. Later, Hu \cite{hu} gave another proof of these estimates and removed the log loss for the case $(n,k,p)=(n,n-1,\frac{2n}{n-1})$, by applying the classical estimates of Fourier integral operators from the work of Greenleaf-Seeger \cite{gs1994}. However, how to remove the log loss in the remaining case $(n,k,p)=(n,n-2,2)$ is still an open problem. Therefore, this paper is devoted to the study of the sharp eigenfunction estimates in this case. It has long been believed that the log loss can be removed in general, while only one partial result is known. Recall that Chen-Sogge  \cite{chensogge} proved that if $n=3$ and the submanifold is a geodesic, then the log loss can be removed.

We first generalize Chen-Sogge's result to higher dimensions when the submanifolds are totally geodesic.

\begin{theorem}\label{thm0}

Let ($M,g$) be a compact smooth Riemannian manifold of dimension $n\ge3$, and let $\Sigma$ be a smooth totally geodesic submanifold of dimension $n-2$. There exists a constant $C>0$ such that for any
$e_\lambda$, we have
           \begin{equation}\label{key0}
           \|e_\lambda\|_{L^2(\Sigma)}\leq C(1+\lambda)^{\frac{1}{2}}\|e_\lambda\|_{L^2(M)}.
           \end{equation}
\end{theorem}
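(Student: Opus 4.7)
The plan is a $TT^*$ reduction combined with a short-time wave kernel analysis that uses the totally geodesic hypothesis to eliminate the oscillations in the normal frequency. Pick $\chi\in\mathcal{S}(\mathbb{R})$ with $\chi(0)=1$ and $\widehat{\chi}$ supported in $(-\delta,\delta)$, where $\delta$ is smaller than the injectivity radius of $M$. Since $\chi(\lambda-\sqrt{-\Delta})\varphi_\lambda=\varphi_\lambda$, it suffices to show $\|R\chi(\lambda-\sqrt{-\Delta})\|_{L^2(M)\to L^2(\Sigma)}\le C\lambda^{1/2}$, where $R$ denotes restriction to $\Sigma$. By $TT^*$ this is equivalent to the operator bound
$$\bigl\|R\,|\chi|^2(\lambda-\sqrt{-\Delta})\,R^*\bigr\|_{L^2(\Sigma)\to L^2(\Sigma)}\le C\lambda,$$
and the Schwartz kernel of $R\,|\chi|^2(\lambda-\sqrt{-\Delta})\,R^*$ on $\Sigma\times\Sigma$ is
$$K(x,y)=\sum_j|\chi(\lambda-\lambda_j)|^2\,\phi_j(x)\overline{\phi_j(y)}.$$

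Expressing $|\chi|^2(\lambda-\sqrt{-\Delta})$ by Fourier inversion and inserting the Hadamard--H\"ormander parametrix for the half-wave propagator $e^{-it\sqrt{-\Delta}}$, valid on $|t|<2\delta$, I obtain, modulo $O(\lambda^{-N})$ errors, the oscillatory integral representation
$$K(x,y)\sim\int e^{i\langle\exp_y^{-1}(x),\xi\rangle}\,|\chi|^2(\lambda-|\xi|_{g(y)})\,a(x,y,\xi)\,d\xi,$$
with $a$ a classical symbol of order zero. Now I work in Fermi coordinates around $\Sigma$ and split $\xi=(\xi',\xi'')\in T_y^*\Sigma\oplus N_y^*\Sigma$. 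Because $\Sigma$ is totally geodesic, the short $M$-geodesic between nearby $x,y\in\Sigma$ stays inside $\Sigma$, so $\exp_y^{-1}(x)\in T_y\Sigma$ and the phase reduces to $\langle\exp_y^{-1}(x),\xi'\rangle$ with no $\xi''$-dependence whatsoever. The remaining inner integral over the two-dimensional shell $\{\xi''\in\mathbb{R}^2:|\sqrt{|\xi'|^2+|\xi''|^2}-\lambda|\lesssim 1\}$ is then explicit: the shell has area $\sim\lambda$ for $|\xi'|\le\lambda-C$ and is empty for $|\xi'|\ge\lambda+C$. Pulling out this factor of $\lambda$ leaves
$$K(x,y)\sim\lambda\int_{T_y^*\Sigma}e^{i\langle\exp_y^{-1}(x),\xi'\rangle}\,b(x,y,\xi')\,d\xi',$$
where $b$ is uniformly bounded and supported in $\{|\xi'|\lesssim\lambda\}$. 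This is exactly $\lambda$ times a smoothed spectral cut-off of $\sqrt{-\Delta_\Sigma}$ at height $\sim\lambda$, which, being a bounded spectral multiplier of the induced Laplacian, has $L^2(\Sigma)\to L^2(\Sigma)$ norm $O(1)$ uniformly in $\lambda$; consequently $\|K\|_{L^2(\Sigma)\to L^2(\Sigma)}\lesssim\lambda$, finishing the proof.

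The main technical hurdles are (i) realizing the heuristic oscillatory integral as an honest spectral multiplier / pseudodifferential operator on $\Sigma$ uniformly in $\lambda$, which requires verifying that after $\xi''$-integration the residual amplitude $b$ belongs to a standard symbol class; (ii) treating the transitional regime $|\xi'|\approx\lambda$ where the normal shell degenerates to a cap, and the crude area count must be refined through a stationary-phase expansion; and (iii) globalizing by a finite partition of unity on $\Sigma$ together with the observation that pairs $(x,y)\in\Sigma\times\Sigma$ with $d(x,y)\ge\delta$ contribute only an $O(\lambda^{-N})$ error via non-stationary phase in the large-$t$ regime. The totally geodesic hypothesis is indispensable throughout: without it, $\exp_y^{-1}(x)$ would acquire a normal component of size $O(d(x,y)^2)$ coming from the second fundamental form, coupling $\xi''$ back into the phase and reintroducing precisely the logarithmic loss present in the Burq--G\'erard--Tzvetkov estimate.
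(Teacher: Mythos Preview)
Your outline is essentially the paper's strategy: $TT^*$, Hadamard parametrix, and the observation that total geodesy forces $\exp_y^{-1}(x)\in T_y\Sigma$ so the phase is independent of the normal frequency $\xi''$, after which the $\xi''$-integration produces the factor $\lambda$ and one is left with an operator on $\Sigma$ that ought to be uniformly $L^2$-bounded. The paper organizes the computation through spherical coordinates on $\xi$ rather than your tangent/normal splitting, but this is cosmetic.

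Where you and the paper diverge is precisely at what you call hurdle~(i), and this is not a routine technicality but the heart of the argument. Your sentence ``This is exactly $\lambda$ times a smoothed spectral cut-off of $\sqrt{-\Delta_\Sigma}$'' is not literally true: the amplitude $b$ you obtain carries the leading Hadamard coefficient $\omega_0(x,y)$ coming from the parametrix on $M$, whereas a genuine spectral multiplier of $\sqrt{-\Delta_\Sigma}$ would carry the intrinsic coefficient $\tilde\omega_0(x,y)$ from the parametrix on $(\Sigma,h)$. These are different smooth functions, and a generic zeroth-order amplitude supported in $\{|\xi'|\le\lambda\}$ does \emph{not} give a uniformly bounded operator (the borderline $m=\frac{k+1}{2}$ case of the Burq--G\'erard--Tzvetkov oscillatory-integral lemma produces exactly a $\log\lambda$). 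The paper resolves this by writing down the spectral projector $P_\lambda=\chi_{[-\lambda,\lambda]}(\sqrt{-\Delta_\Sigma})$ via a second Hadamard parametrix on $\Sigma$, and then comparing: the difference of the two leading terms has amplitude $(\omega_0-\tilde\omega_0)(x,y)$, which vanishes on the diagonal because both Hadamard expansions are normalized by $\omega_0(x,x)=\tilde\omega_0(x,x)=1$. That $O(d_\Sigma(x,y))$ gain pushes the difference into the subcritical range of the oscillatory-integral lemma, killing the logarithm. So your plan is correct, but to close it you must carry out this comparison rather than invoke an off-the-shelf $L^2$ boundedness statement.
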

These estimates are sharp and saturated by zonal functions on the sphere $S^n$, see \cite[Section 6.2]{burq}.

Sketch of proof: The case $n=3$ was proved in \cite{chensogge}. Here we assume $n\geq 4$. First we apply the $TT^*$ argument and reduce the problem to an operator norm bound over the submanifold $\Sigma$. Then we expand the kernel of this operator by the Hadamard parametrix. For the main term, we do a scaling and compare it to a projection operator with uniform bound over $L^2(\Sigma)$. For all other terms, due to the gains on the exponent, we can use Lemma \ref{lemma2} to control their operator norms.

Next, we will focus on the 3-dimensional case where the submanifolds are smooth curves.  We will see that the problem is directly related to the singular integral operators $T_\lambda$ are of the form
\begin{equation}\label{Tlambda}T_\lambda f(t)={\rm p.v.}\int e^{i\lambda\phi(t,s)}(t-s)^{-1}a(t,s)f(s)ds,\end{equation}
where $\phi$ is smooth, $\lambda$ is real, and $a\in C_0^\infty(\mathbb{R}^2)$.
These operators and their generalizations in higher dimensions have been studied by Phong-Stein \cite{ps86}, Ricci-Stein \cite{rs87}, Pan \cite{pan91}, Seeger \cite{seeger94}, Carbery-P\'erez \cite{cp99}. It was shown by Phone-Stein \cite[p.117]{ps86} that uniform $L^2(\mathbb{R})$ estimates of $T_\lambda$ can be applied to show $L^2(\mathbb{R}^2)$ boundedness of Hilbert transform $\mathcal{H}$ along variable curves: \begin{equation}\label{ht}
  \|\mathcal{H}\|_{L^2(\mathbb{R}^2)\to L^2(\mathbb{R}^2)}\le \sup_{\lambda\in \mathbb{R}}\|T_\lambda\|_{L^2(\mathbb{R})\to L^2(\mathbb{R})}.
\end{equation} Here $\mathcal{H}$ is defined a priori on functions in $C_0^\infty(\mathbb{R}^2)$: \[\mathcal{H}f(x)=\eta(x){\rm p.v.}\int_{-\delta}^{\delta}f(x_1-t,x_2-\phi(x_1,x_1-t))\frac{dt}{t},\]
where $\eta\in C_0^\infty(\mathbb{R}^2)$ and $\delta>0$ is suitably small. Pan \cite[Theorem 2]{pan91} proved that $T_\lambda$ is uniformly bounded on $L^2(\mathbb{R})$ if one imposes a weak finite type condition: the mixed derivative $\phi_{ts}''$ does not vanish to infinite order on ${\rm supp}\ a$ (e.g. the phase function $\phi$ is real-analytic). Later, Seeger \cite{seeger94}, Carbery-P\'erez \cite{cp99} considered certain ``flat'' cases where the finite type condition is not satisfied. In the translation invariant case $\phi(t,s)=\psi(t-s)$, Nagel-Vance-Wainger-Weinberg \cite{nvww} proved necessary and sufficient conditions in the case that $\psi$ is even(or odd) and convex. However, Nagel-Wainger \cite[Theorem 4.1]{nw} constructed an odd smooth function $\psi(t)$ on $[-1,1]$, which vanishes of infinite order at $t=0$, such that the Hilbert transform $\mathcal{H}$ along the curve $(t,\psi(t))$ is unbounded on $L^2(\mathbb{R}^2)$. This implies that the operators $T_\lambda$ may not be uniformly bounded on $L^2(\mathbb{R})$ if the finite type condition in \cite[Theorem 2]{pan91} is removed.

In the following, we extend Chen-Sogge's result \cite[Theorem 1]{chensogge} on geodesics to more general curves.
\begin{theorem}\label{thm1}
Let $(M,g)$ be a compact smooth 3-dimensional Riemannian manifold. Let $\gamma\subset M$ be a fixed unit-length curve with nonvanishing geodesic curvatures. There exists a constant $C>0$ such that for any
$e_\lambda$, we have
\begin{equation}\label{key}\|e_\lambda\|_{L^2(\gamma)}\le C(1+\lambda)^\frac12\|e_\lambda\|_{L^2(M)}.\end{equation}
\end{theorem}

 This bound is sharp and saturated by zonal functions on the sphere $S^3$, see \cite[Section 6.2]{burq}. With more careful consideration as in the work of Pan \cite{pan91}, the log loss can be removed for those curves satisfying certain finite type condition, such as the real-analytic curves on real-analytic manifolds.  We use the wave kernel method and the Hadamard parametrix to reduce the problem to the uniform $L^2$-estimates of a class of singular integrals with oscillatory terms in \eqref{Tlambda}. Then we may use the H\"ormander's oscillatory integral theorem to estimate the operator norm. For more general curves, one may need to apply the oscillatory integral theorem in Pan \cite[Theorem 2]{pan91}.

  The paper is organized as follows. In section 2, we first recall the Hadamard parametrix and oscillatory integral estimates from \cite{burq}. Then we use them to prove Theorem \ref{thm0}. In section 3, we prove Theorem \ref{thm1} by the analyzing precisely the oscillations which appear in the phase (i.e. the distance function restricted to the curve). In section 4, we discuss the possible generalizations and the related open problems. Throughout this paper, the injectivity radius of $M$ is sufficiently large, and using a partition of unity, we may assume that $\Sigma$ is  contained in the domain of a given coordinate patch. The notation $A\lesssim B$ and $ A\gtrsim B$
denote $A\leq CB$ and $A\geq CB$ respectively,  for some constant $C$ which does not depend on $\lambda$.

\noindent\textbf{Acknowledgement. } The authors would like to thank Professor Allan Greenleaf, Professor Chris Sogge, and Professor Yibiao Pan for their helpful suggestions and comments. Thanks also to Xiaoqi Huang for his very thorough reading of the preprint.

\section{Restriction to totally geodesic submanifolds}
   \label{pr}
In this section, we prove Theorem \ref{thm0}. First, we need the Hadamard parametrix, see \cite[Theorem 3.1.5]{hangzhou} for references.

 \begin{lemma}
  \label{lemma1}
  Let $(M,g)$ be a compact manifold without boundary.
  Let  $\delta>0$ be smaller than the injectivity radius of $(M,g)$.
  If $|t |\leq \delta$ and $N>n+3$, then we have
   \begin{equation}
  \cos t\sqrt{-\Delta_g}(x,y)=K_N(t,x,y)+R_N(t,x,y)
 \end{equation}
  where $R_N \in C^{N-n-3}([-\delta,\delta]\times M\times M)$, and

 \begin{equation}
          K_N(t,x,y)= \left\{ \begin{array}{l}
\partial_t(\sum\limits_{\nu=0}^{N} \omega_\nu (x,y)E_\nu (t,\kappa(x,y))) \quad\quad\quad\  \text{if $t \geq 0$} \\
-\partial_t(\sum\limits_{\nu=0}^{N} \omega_\nu (x,y)E_\nu (-t,\kappa(x,y))) \quad\quad \text{if $t<0$}.
\end{array}
\right.
          \end{equation}
Here $\kappa(x,y)$ is the vector from $x$ to $y$ in the local geodesic coordinates at $x$, and we have $|\kappa(x,y)|=d_g(x,y)$. Moreover, $\omega_\nu \in C^\infty(M\times M)$, and $\omega_0(x,x)=1,\forall x \in M$.
$E_\nu$ are distributions such that
\begin{equation}
 \partial_tE_0(t,x)=\frac{H(t)}{(2\pi)^{n}}\int_{\mathbb{R}^n} e^{ix\cdot \xi}\cos(t|\xi|)d\xi,
 \end{equation}
  and $E_\nu, \nu=1,2,3...$ is a finite linear combination of Fourier integrals of the form:
 \begin{equation}
H(t)t^j\int_{|\xi|>1} e^{ix\cdot \xi\pm it|\xi|}|\xi|^{-\nu-1-k}d\xi+\eta_{j\nu}
 \end{equation}
where $j,\ k\ge0$, $j+k=\nu$, and $\eta_{j\nu}$ are smooth. Here $H(t)$ is the Heaviside function. Furthermore, we have $\partial_t E_\nu=\frac{t}{2}E_{\nu-1}$, $\nu=1,2,3,...$.
 \end{lemma}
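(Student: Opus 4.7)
The plan is a WKB/transport construction inside the injectivity radius. Fix $y\in M$ and work in geodesic normal coordinates centered at $y$, so that $\kappa(x,y)$ is the Euclidean position vector $x$, $|\kappa|=\mathrm{dist}(x,y)$, and the metric satisfies $g_{ij}=\delta_{ij}+O(|x|^2)$ with the radial identity $g^{ij}x_j=x^i$. I first define the model distributions $E_\nu$ recursively from $E_0(t,x):=(2\pi)^{-n}\int_{\mathbb{R}^n} e^{ix\cdot\xi}\sin(t|\xi|)/|\xi|\,d\xi$ by integrating $\partial_tE_\nu=\tfrac{t}{2}E_{\nu-1}$ in $t$, and verify by induction that each $E_\nu$ admits the Fourier-integral representation (1.7): splitting $\cos$ and $\sin$ into $(e^{it|\xi|}\pm e^{-it|\xi|})/2$, each successive $t$-integration raises the power of $|\xi|^{-1}$ by one and inserts a factor of $t$ via the Leibniz rule, matching the allowed form $H(t)t^j|\xi|^{-\nu-1-k}$ with $j+k=\nu$ up to a smooth error $\eta_{j\nu}$. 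The flat model identity $(\partial_t^2-\Delta_x)E_0(t,\kappa)=\delta_0(t)\delta_0(\kappa)$ from (1.6) is what will drive the Cauchy data matching at $t=0$.

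Next I apply $P:=\partial_t^2-\Delta_g$ to the ansatz $u_N(t,x,y):=\sum_{\nu=0}^N\omega_\nu(x,y)E_\nu(t,\kappa(x,y))$. In normal coordinates, $\Delta_g=\Delta_{\mathrm{Eucl}}+\widetilde R$, where $\widetilde R$ is a second-order operator with coefficients vanishing to the appropriate order at $y$; using the recursion $\partial_tE_\nu=\tfrac{t}{2}E_{\nu-1}$, the radial identity, and the Leibniz rule, $Pu_N$ reorganizes as a telescoping series $\sum_{\nu}\bigl[(L_y\omega_\nu)E_{\nu-1}+(\Delta_g\omega_{\nu-1})E_{\nu-1}\bigr]$ plus a boundary term at $\nu=N$. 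Here $L_y$ is a first-order transport operator along the radial geodesics from $y$, of the form $2r\partial_r+(\Delta_g r^2/2-n)$ with $r=|\kappa|$. Setting each $E_{\nu-1}$-coefficient to zero yields the Hadamard transport equations $L_y\omega_0=0$ with $\omega_0(y,y)=1$ (so that $\omega_0$ is the Van Vleck--Morette factor $(\det g(x))^{-1/4}$), and $L_y\omega_\nu=-\Delta_g\omega_{\nu-1}$ with $\omega_\nu(y,y)=0$ for $\nu\ge1$. These are radial ODEs along each geodesic from $y$, uniquely solvable inside the injectivity radius; smoothness of the exponential map then gives $\omega_\nu\in C^\infty(M\times M)$ as claimed, in particular $\omega_0(x,x)=1$.

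After this choice of $\omega_\nu$, the only residual in $Pu_N$ is the $\nu=N$ boundary contribution $\omega_0(\Delta_g\omega_N)E_N(t,\kappa)$; by (1.7) $E_N$ is a Fourier integral whose symbol has order $-N-1$, so $N-n-3$ derivatives pass through the integral and this residual lies in $C^{N-n-3}([-\rho,\rho]\times M\times M)$ once $N>n+3$. Define $R_N:=\cos t\sqrt{-\Delta_g}(x,y)-K_N$. Then $R_N$ solves $PR_N=-PK_N$ with zero Cauchy data: $\omega_0(x,x)=1$ combined with the $\partial_t$-of-odd-extension prescription in (1.5) forces the Cauchy data of $K_N$ to equal $(\delta_y,0)$, matching those of $\cos t\sqrt{-\Delta_g}$. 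Duhamel's formula and energy estimates for the wave group then yield $R_N\in C^{N-n-3}$ throughout the stated time interval. The main obstacle is the algebraic bookkeeping in the middle paragraph: one must verify that the curvature remainder $\widetilde R$ contributes only strictly lower-order terms in the $E_\nu$-expansion, so that the equations for $\omega_\nu$ genuinely decouple into radial ODEs rather than mixing orders. This decoupling hinges on the radial identity $g^{ij}x_j=x^i$ that characterizes geodesic normal coordinates, which is the geometric input that keeps Hadamard's construction tractable.
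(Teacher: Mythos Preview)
The paper does not prove this lemma; it is quoted as a standard construction with a pointer to Sogge's text (the reference \cite{MR3186367}). Your outline is exactly the classical Hadamard transport--equation argument carried out there, so you are in line with what the paper invokes.

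A couple of small corrections to the bookkeeping. The transport operator acting on $\omega_\nu$ should carry a $\nu$-dependent zeroth-order piece: it arises when one converts $x\cdot\nabla_x E_\nu$ back into the $E_{\nu-1}$, $E_\nu$ basis using the homogeneity of $E_\nu$ together with the recursion $2\partial_tE_\nu=tE_{\nu-1}$. With the $\nu$-independent $L_y$ you wrote, the radial ODE degenerates at $r=0$ and one cannot freely impose $\omega_\nu(y,y)=0$; in the correct version the value $\omega_\nu(y,y)$ is \emph{determined} by the requirement of smoothness at the origin, not prescribed. Also, the residual after the hierarchy is solved is $-(\Delta_g\omega_N)\,E_N$, without the extra factor $\omega_0$. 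Neither point is structural---the Duhamel/energy-estimate step for $R_N$ and the regularity count $C^{N-n-3}$ are correct as you stated them.
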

Hence, for $0<t<\delta$ and $N>n+3$, modulo a smooth error we can write
\begin{align*}
  \cos t\sqrt{-\Delta_g}(x,y)&=\omega_0(x,y)\partial_tE_0(t,\kappa(x,y))+\sum_{\nu=1}^N\omega_\nu(x,y)\frac{t}2 E_{\nu-1}(t,\kappa(x,y))\\
  &=\frac{\omega_0(x,y)}{(2\pi)^n}\int_{\mathbb{R}^n} e^{i\kappa(x,y)\cdot \xi}\cos t|\xi|d\xi\\
  &+\sum_{\nu=1}^N\sum_{j=0}^{\nu-1}\frac{\omega_\nu(x,y)}{(2\pi)^n}a_{j\nu}^\pm t^{j+1}\int_{|\xi|>1}e^{i\kappa(x,y)\cdot \xi\pm it|\xi|}|\xi|^{-2\nu+1+j}d\xi.
\end{align*}
 Here $a_{j\nu}^\pm$ are constant coefficients. We also need the following estimates for oscillatory integral operators, which will be used several times in the proof. See \cite[Proposition 6.3]{burq}.

  \begin{lemma}
  \label{lemma2}Let $m\ge0$ and $k\ge1$.
  Let $(N,h)$ be a $k$ dimensional compact Riemannian manifold with distance function $d_h:N\times N\to [0,\infty)$. Let $Q_\lambda$ be an operator on $L^2(N)$ with the kernel $Q_\lambda(x,y)$ satisfying $|Q_\lambda(x,y)|\ls 1$ and when $d_h(x,y)>\lambda^{-1}$
  \begin{equation}
  Q_\lambda(x,y)= \sum\limits_{\pm} \frac{e^{\pm i\lambda d_h(x,y)}}{(\lambda d_h(x,y))^m}a^\pm(x,y,\lambda),
     \end{equation}
     where $a^\pm(x,y,\lambda) \in C^\infty(N\times N\times \mathbb{R})$ and we have $|\partial^\alpha_{x,y}a^\pm(x,y,\lambda)|\le C_\alpha d_h(x,y)^{-|\alpha|}$, for $d_h(x,y)>\lambda^{-1}$. Then
     \begin{eqnarray}
     \|Q_\lambda\|_{L^2(N)\to L^2(N)}
     &&\lesssim \lambda^{-m-\frac{k-1}{2}} \sum\limits_{j\leq \log\lambda}^{}2^{j(m-\frac{k+1}{2})} \nonumber \\
        &&\lesssim \left\{ \begin{array}{l}
\lambda^{-k} \quad\quad\quad\quad\quad\quad\quad\ \text{if $m>\frac{k+1}{2}$} \\
\lambda^{-m-\frac{k-1}{2}} \quad\quad\quad\quad\quad \text{if $m<\frac{k+1}{2}$} \\
\lambda^{-m-\frac{k-1}{2}}\log\lambda \quad\quad\ \ \  \text{if $m=\frac{k+1}{2}$}.
\end{array}
\right.\nonumber
          \end{eqnarray}
  \end{lemma}

\subsection{Proof of Theorem \ref{thm0}}

 Choose any
   $\chi \in \mathcal{S}(\mathbb{R})$, such that $\chi(0)=1$, supp$\hat{\chi} \subset [1,2]$.
Let $\chi_\lambda f=\chi(\lambda-\sqrt{-\Delta_g})f$, then $\chi_\lambda \varphi_\lambda=\varphi_\lambda$. Thus it suffices to show
\begin{equation}
\label{eq1}
||\chi_\lambda||_{L^2(M)\to L^2(\Sigma)} \lesssim \lambda^{\frac{1}{2}}.
\end{equation}
By $TT^*$ argument,  \eqref{eq1} is equivalent to
\begin{equation}\label{bd}
||\chi_\lambda \chi_\lambda^*||_{L^2(\Sigma)\to L^2(\Sigma)} \lesssim \lambda.
\end{equation}
Let $T_\lambda=\chi_\lambda \chi_\lambda^*$, a simple calculation shows the kernel of $T_\lambda$ is the same as
\begin{equation}
\chi^2(\lambda-\sqrt{-\Delta_g})(x,y)|_{\Sigma\times\Sigma}
\end{equation}

Let $\phi=\chi^2$, then $\phi(0)=1$, supp$\hat{\phi}\subset [2,4]$.
\begin{eqnarray}
T_\lambda&=&\phi(\lambda-\sqrt{-\Delta_g}) \nonumber\\
&=&\frac{1}{2\pi}\int_{\mathbb{R}}\hat{\phi}(t)e^{it(\lambda-\sqrt{-\Delta_g})}dt \nonumber\\
&=&\frac{1}{\pi}\int_{\mathbb{R}}\hat{\phi}(t)e^{i\lambda t}\cos(t\sqrt{-\Delta_g})dt-\phi(\lambda+\sqrt{-\Delta_g}).\nonumber
\end{eqnarray}
Here $\phi(\lambda+\sqrt{-\Delta_g})(x,y)=O(\lambda^{-N})$, $N=1,2,...$. Then by the support property of $\hat\phi$ and the Hadamard parametrix in Lemma \ref{lemma1}, modulo $O(1)$ the kernel of $T_\lambda$ can be written as
\begin{eqnarray}
T_\lambda(x,y)&=&\frac{\omega_0(x,y)}{\pi(2\pi)^{n}}\int_\mathbb{R}\hat{\phi}(t)e^{i\lambda t}\int_{\mathbb{R}^n}e^{i\kappa(x,y)\cdot \xi}\cos t|\xi| d\xi dt \nonumber\\
&&+\sum\limits_{\nu=1}^{N} \sum\limits_{j=0}^{\nu-1}a_{j\nu}^\pm\frac{\omega_\nu(x,y)}{(2\pi)^{n}}\int_\mathbb{R}\hat{\phi}(t)e^{i\lambda t}  t^{j+1}\int_{|\xi|>1} e^{i\kappa(x,y)\cdot \xi\pm it|\xi|}|\xi|^{-2\nu+1+j}d\xi dt \nonumber\\
&=&\frac{\omega_0(x,y)}{(2\pi)^n}\int_{\mathbb{R}^n}\phi(\lambda\pm|\xi|)e^{i\kappa(x,y)\cdot \xi}d\xi \nonumber\\
&&+\sum\limits_{\nu=1}^{N}\sum\limits_{j=0}^{\nu-1}\frac{\omega_\nu(x,y)}{(2\pi)^{n-1}}\int_{|\xi|>1}\phi_{j\nu}^\pm(\lambda\pm|\xi|)e^{i\kappa(x,y)\cdot \xi}|\xi|^{-2\nu+1+j}d\xi.
\label{original}
\end{eqnarray}
Here $\phi_{j\nu}^\pm$ is the inverse Fourier transform of $a_{j\nu}^\pm \hat{\phi}(t)t^{j+1} $, which are also Schwartz functions independent of $\lambda$.

Next we introduce a new operator which will play an important role in the proof and help us simplify the calculations. Define $S_{r}^{\nu}$, $\nu=0,1,2,3...$ to be the operator with kernel:
\begin{equation}
S_{r}^{\nu}(x,y)=\omega_\nu(x,y)\int_{S^{n-1}}e^{ir\kappa(x,y)\cdot\omega}d\omega.
\end{equation}
Here $S^{n-1}$ denotes the standard unit sphere in $\mathbb{R}^n$. By Stationary Phase (e.g. \cite{fio}), we can see that $S_{r}^{\nu}$ satisfies the condition in Lemma \ref{lemma2} with $k=n-2$ and $m=\frac{n-1}{2}$, thus by Lemma \ref{lemma2}, we have the following estimate:
\begin{equation}
\label{sbound}
\|S_{r}^{\nu}\|_{L^2(\Sigma)\to L^2(\Sigma)} \lesssim r^{2-n}\log{r}.
\end{equation}
Then using the spherical coordinates for the $\xi$ variables, we can rewrite \eqref{original} as
\begin{align*}
T_\lambda(x,y)&=(2\pi)^{-n}\int_{0}^{\infty}\phi(\lambda\pm r)S_r^0(x,y)r^{n-1}dr \nonumber\\
&+(2\pi)^{1-n}\sum\limits_{\nu=1}^{N}\sum\limits_{j=0}^{\nu-1}\int_{1}^{\infty}\phi_{j\nu}^\pm(\lambda\pm r)S_r^\nu(x,y)r^{-2\nu+1+j}\cdot r^{n-1}dr \nonumber\\
&:=A_\lambda(x,y)+B_\lambda(x,y).
\end{align*}
 By using the estimate \eqref{sbound}, we are able to control $B_\lambda$:
\begin{align*}
\|B_\lambda\|_{L^2(\Sigma)\to L^2(\Sigma)} &\lesssim \sum\limits_{\nu=1}^{N}\sum\limits_{j=0}^{\nu-1}\int_{1}^{\infty}|\phi_{j\nu}^\pm(\lambda\pm r)|\|S_r^\nu\|_{L^2(\Sigma)\to L^2(\Sigma)}r^{-1}\cdot r^{n-1}dr\\
&\lesssim \sum\limits_{\nu=1}^{N}\sum\limits_{j=0}^{\nu-1}\int_{1}^{\infty}|\phi_{j\nu}^\pm(\lambda\pm r)|r^{2-n}\log r\cdot r^{n-2}dr \\
&\lesssim \log\lambda.
\end{align*}
It is better than \eqref{bd}.  Therefore, if we can improve \eqref{sbound} for $S_r^0$ and obtain the following stronger estimate without the log loss, then we are able to control $A_\lambda$ as needed, and the proof of \eqref{bd} would be complete.
\begin{lemma}\label{keylemma}
We have
\begin{equation}
\|S_r^0\|_{L^2(\Sigma)\to L^2(\Sigma)} \lesssim r^{-n+2}.
\label{kbound}
\end{equation}
\end{lemma}
We will see that it is true by exploiting the fact that $\omega_0\equiv1$ on the diagonal and the assumption that $\Sigma$ is totally geodesic.

\subsection{Proof of Lemma \ref{keylemma}}
Since this estimate is a local estimate, without loss of generality, we may assume that $\Sigma$ is closed. Let $h=g|_\Sigma$, then $(\Sigma, h)$ is a closed Riemannian manifold.

Denote $\tilde{\kappa}(x,\cdot):\ \Sigma\to
\mathbb{R}^{n-2}$, $y\mapsto \tilde{\kappa}(x,y)$ as the vector from $x$ to $y$ in the local
geodesic coordinates with respect to $(\Sigma,h)$ at $x$. Similarly, in the following, any functions or operators under '$\sim$' will be on the submanifold $\Sigma$.
Since  $\Sigma$ is totally geodesic, we can assume
$\kappa|_{\Sigma\times \Sigma}=(\tilde\kappa,0,0)$.
Accordingly, we can make the following change of coordinates:
\begin{equation}
B^{n-2}(1)\times [0,2\pi)\to S^{n-1}:
 (z,\sqrt{1-|z|^2}\cos\theta,\sqrt{1-|z|^2}\sin\theta)
\end{equation}
The Jacobian is 1, thus we can modify the kernel of operator $S_r^0$ as
\begin{eqnarray}
S_r^0(x,y)&=&\omega_0(x,y)\int_{B^{n-2}(1)}\int_{0}^{2\pi}e^{ir\kappa(x,y)\cdot
\omega(z,\theta)}d\theta dz \nonumber\\
&=&2\pi\omega_0(x,y)\int_{B^{n-2}(1)}e^{ir\tilde\kappa(x,y)\cdot z}dz \nonumber\\
&=&2\pi\omega_0(x,y)r^{2-n}\int_{B^{n-2}(r)}e^{i\tilde\kappa(x,y)\cdot z}dz. \label{sph}
\end{eqnarray}
Here $B^{n-2}(r)$ is the ball of radius $r$ centered at 0 in $\mathbb{R}^{n-2}$.

Let $\overline{S}_r$ be the operator with kernel
\begin{equation}
\overline{S}_r(x,y)=\omega_0(x,y)\int_{B^{n-2}(r)}e^{i\tilde\kappa(x,y)\cdot z}dz.
\end{equation}
Then \eqref{kbound} is equivalent to
\begin{equation}
\|\overline{S}_r\|_{L^2(\Sigma)\to L^2(\Sigma)} \lesssim 1.
\end{equation}

To prove this estimate, we compare $\overline{S}_r$ to an operator with uniform
bound over $L^2(\Sigma)$.

Consider the eigenfunctions and eigenvalues of $-\Delta_h$ over $\Sigma$:
\begin{equation}
-\Delta_h e_{\mu_j}=\mu_j^2 e_{\mu_j},\ 0=\mu_0<\mu_1\leq \mu_2\leq\cdots.
\end{equation}
For $\mu\ge1$, let $P_\mu$ be the projection map to the eigenspace with
eigenvalue $\leq \mu$, that is
\begin{equation}
P_\mu=\sum\limits_{\mu_j\leq\mu}^{}E_{\mu_j}=
\textbf{1}_{[-\mu,\mu]}(\sqrt{-\Delta_h}).
\end{equation}
Obviously, $\|P_\mu\|_{L^2(\Sigma)\to L^2(\Sigma)} \leq 1$. Note that we may rewrite $P_\mu$ by the Fourier inversion formula
\begin{eqnarray}
P_\mu&=&\frac{1}{2\pi}\int_{\mathbb{R}}e^{it\sqrt{-\Delta_h}}
\hat{\textbf{1}}_{[-\mu,\mu]}(t)dt \nonumber\\
&=&\frac{1}{\pi}\int_{\mathbb{R}}e^{it\sqrt{-\Delta_h}}\frac{\sin \mu t}{t}dt \nonumber\\
&=&\frac{1}{\pi}\int_{\mathbb{R}}e^{it\sqrt{-\Delta_h}}\beta(t)\frac{\sin \mu t}{t}dt+\frac{1}{\pi}\int_{\mathbb{R}}e^{it\sqrt{-\Delta_h}}(1-\beta(t))\frac{\sin \mu t}{t}dt.
\label{haha}
\end{eqnarray}
Here we choose $\beta(t)$ to be an even cut-off function supported in $[-\delta,\delta]$, $\beta(t)=1$ in $[-\frac{\delta}{2},\frac{\delta}{2}]$, $\delta>0$ is less than the injective radius of $(\Sigma, h)$.
Let $r_\mu$ be the inverse Fourier transform of $t\mapsto (1-\beta(t))\frac{2\sin \mu t}{t}$, by integration by parts
we see that $r_\mu$ satisfies
\begin{equation}\label{rmu}
|r_\mu(\tau)|\leq C_N(1+|\mu-|\tau||)^{-N}, \ \mu \geq 1, \ N=1,2,3\cdots.
\end{equation}
Hence we can rewrite \eqref{haha} as
\begin{equation}\label{haha2}
P_\mu=\frac{1}{\pi}\int_{\mathbb{R}}\beta(t)\frac{\sin \mu t}{t}\cos t\sqrt{-\Delta_h}dt+
r_\mu(\sqrt{-\Delta_h}).
\end{equation}
For the second term in \eqref{haha2}, by \eqref{rmu} it is a multiplier uniformly bounded over $L^2(\Sigma)$.
 For the first term, we can compute it by the Hadamard parametrix on $(\Sigma, h)$. So modulo $O(1)$ we can rewrite the first term in \eqref{haha2} as
\begin{eqnarray}
&&\frac{\tilde{\omega}_0(x,y)}{\pi(2\pi)^{n-2}}\int_{\mathbb{R}}
\int_{\mathbb{R}^{n-2}}\beta(t)\frac{\sin \mu t}{t}e^{i\tilde\kappa(x,y)\cdot z}\cos t|z|dtdz
\nonumber\\
&&+\sum_{\nu=1}^{N}\sum_{j=0}^{\nu-1}\frac{\tilde\omega_\nu(x,y)}{(2\pi)^{n-2}}a_{j\nu}^\pm \int_\mathbb{R}\beta(t)\frac{\sin \mu t}{t} t^{j+1}\int_{|z|>1} e^{i\kappa(x,y)\cdot z\pm it|z|}|z|^{-2\nu+1+j}dz dt  \nonumber\\
&=&\frac{\tilde{\omega}_0(x,y)}{\pi(2\pi)^{n-2}}\int_{\mathbb{R}}
\int_{\mathbb{R}^{n-2}}\beta(t)\frac{\sin \mu t}{t}e^{i\tilde\kappa(x,y)\cdot z}e^{it|z|}dtdz
\nonumber\\
&&+\sum_{\nu=1}^{N}\sum_{j=0}^{\nu-1}\frac{\tilde\omega_\nu(x,y)}{(2\pi)^{n-2}}a_{j\nu}^\pm \int_\mathbb{R}\beta(t)t^{j}\sin \mu t \int_{|z|>1} e^{i\kappa(x,y)\cdot z\pm it|z|}|z|^{-2\nu+1+j}dz dt  \nonumber
\end{eqnarray}
\begin{eqnarray}&=&\frac{\tilde{\omega}_0(x,y)}{(2\pi)^{n-2}}
\int_{\mathbb{R}^{n-2}}\textbf{1}_{[-\mu,\mu]}(|z|)e^{i\tilde\kappa(x,y)\cdot z}dz
\nonumber\\
&&-\frac{\tilde{\omega}_0(x,y)}{(2\pi)^{n-2}}
\int_{\mathbb{R}^{n-2}}r_\mu(|z|)e^{i\tilde\kappa(x,y)\cdot z}dz \nonumber\\
&&+\sum_{\nu=1}^{N}\sum_{j=0}^{\nu-1}\frac{\tilde\omega_\nu(x,y)}{(2\pi)^{n-2}}a_{j\nu}^\pm \int_\mathbb{R}\beta(t)t^{j}\sin \mu t \int_{|z|>1} e^{i\kappa(x,y)\cdot z\pm it|z|}|z|^{-2\nu+1+j}dz dt  \nonumber\\
&:=&\tilde{A}_\mu-\tilde{B}_\mu+\tilde{C}_\mu.\nonumber
\end{eqnarray}
For $\tilde{B}_\mu$, using spherical coordinates, we know that
\[
\tilde{B}_\mu=(2\pi)^{2-n}
\int_{0}^{\infty}r_\mu(\rho)\tilde S_{\rho}^{0}(x,y)\rho^{n-3}  d\rho\]
where
\begin{equation}
\tilde S_{\rho}^{\nu}(x,y)=\tilde\omega_\nu(x,y)\int_{S^{n-3}}e^{ir\tilde\kappa(x,y)\cdot \theta}d\theta.
\end{equation}
Then by Stationary Phase, we can see that $\tilde S_{r}^{\nu}$ satisfies the condition in Lemma \ref{lemma2} with $k=n-2$ and $m=\frac{n-3}{2}$. Thus by Lemma \ref{lemma2}, we have the following estimate:
\begin{equation}\label{tildes}
\|\tilde S_{\rho}^{\nu}\|_{L^2(\Sigma)\to L^2(\Sigma)} \lesssim \rho^{3-n}.
\end{equation}
So
\begin{equation}
\|\tilde B_\mu\|_{L^2(\Sigma)\to L^2(\Sigma)}\lesssim \int_{0}^{\infty}(1+|\mu-\rho|)^{-N}
 \rho^{3-n} \rho^{n-3}d\rho \lesssim 1.
\end{equation}
If we denote the Fourier transform of $a_{j\nu}^\pm\beta(t)t^j$ by $\psi_{j\nu}$, which are Schwartz functions, then $\tilde{C}_\mu$ is a finite sum of
\[\int_1^\infty \psi_{j\nu}(\mu\pm \rho)\tilde S_{\rho}^\nu(x,y)\rho^{-2\nu+1+j}\rho^{n-3}d\rho.\]
Hence by \eqref{tildes} we have
\begin{equation}
\|\tilde C_\mu\|_{L^2(\Sigma)\to L^2(\Sigma)}\ls \sum_{\nu=1}^{N}\sum_{j=0}^{\nu-1}\int_1^\infty \psi_{j\nu}(\mu\pm \rho)\rho^{3-n}\rho^{-1}\rho^{n-3}d\rho\ls \mu^{-1}\le1.
\end{equation}

From above we know that $\tilde{A}_\mu=P_\mu+\tilde B_\mu-\tilde C_\mu-r_\mu(\sqrt{-\Delta_h})$ is uniformly bounded on $L^2(\Sigma)$, so is $\overline{P}_\mu:=(2\pi)^{n-2}\tilde{A}_\mu$. Let $\mu=r$ and consider the difference between $\overline{S}_r$
and $\overline{P}_r$:
\begin{align*}
\overline{S}_r(x,y)-\overline{P}_r(x,y)&=(\omega_0(x,y)-\tilde\omega_0(x,y))\int_{B^{n-2}(r)}e^{i\tilde\kappa(x,y)\cdot z}dz\\
&=(\omega_0(x,y)-\tilde\omega_0(x,y))\frac{r^{n-2}}{2\pi}\int_{S^{n-1}}e^{i\kappa(x,y)\cdot \omega}d\omega.
\end{align*}
Here we use \eqref{sph} in the second equality.
Since $\omega_0(x,x)=\tilde\omega_0(x,x)=1$, we have
\[\omega_0(x,y)-\tilde\omega_0(x,y)=O(d_h(x,y))=O(d_g(x,y)).\]Then by Stationary Phase and Lemma \ref{lemma2} with $k=n-2$ and $m=\frac{n-3}2$, we know
\begin{equation}
 \|\overline{S}_r-\overline{P}_r\|_{L^2(\Sigma)\to L^2(\Sigma)}\lesssim 1.
\end{equation}
Finally we conclude that
\begin{equation}
 \|\overline{S}_r\|_{L^2(\Sigma)\to L^2(\Sigma)}\lesssim 1,
\end{equation}
and this completes the proof.
\section{Restriction to curves on 3-d manifolds}

In this section, we prove Theorem \ref{thm1}.  Let $\gamma:[-\frac12,\frac12]\to M$ be a smooth curve segment parametrized by arc length. Using Taylor expansion, we have the following precise description of $d_g(\gamma(t),\gamma(s))$. See \cite[Lemma 4.5]{burq}.
\begin{lemma}\label{dlemma} We can write for $t,s\in [-\frac12,\frac12]$,
\begin{equation}\label{dist}d_g(\gamma(t),\gamma(s))=|t-s|(1-c(t)(t-s)^2+d(t,t-s)(t-s)^3),\end{equation}
where $c(t)$ and $d(t,t-s)$ are smooth functions. And $c(t)\ge c_0>0$ if $\gamma$ has nonvanishing geodesic curvatures.
\end{lemma}
Note that \cite{burq} only proved it for curves on surfaces, but their proof also works for curves on $3$-dimensional manifolds.
Thus, if  $\phi(t,s)=d_g(\gamma(t),\gamma(s))\sgn(t-s)$ then $\phi(t,s)$ is smooth.
 With this lemma, we are ready to prove Theorem \ref{thm1}.

 Let $\rho\in {\mathcal{S}}(\mathbb R)$ such that $\rho(0)=1$ and ${\rm supp}\ \hat\rho\subset[-1/2,1/2]$, then it is clear that the operator $\rho(\lambda-\sqrt { - {\Delta _g}})$ reproduces eigenfunctions, namely
\[\rho(\lambda-\sqrt { - {\Delta _g}})e_\lambda=e_\lambda.\]
Let $\chi=|\rho|^2$. After a standard $TT^*$ argument, we only need to prove
\begin{equation}\label{cmpmain}
\|\chi(\lambda-\sqrt { - {\Delta _g}})\|_{L^2(\gamma)\rightarrow L^{2}(\gamma)}\ls \lambda.
\end{equation}
We rewrite the kernel $\chi(\lambda-\sqrt { - {\Delta _g}})(x,y)$ by the Fourier inversion formula
\[\begin{aligned}\chi (\lambda  - \sqrt { - {\Delta _g}})(x,y) &= \frac{1}
{{2\pi }}\int_{\mathbb{R}} \hat \chi (\tau ){e^{i\lambda \tau }}({e^{ - i\tau \sqrt { - {\Delta _g}}}})(x,y)d\tau\\
&= \frac{1}
{{\pi }}\int_{\mathbb{R}} \hat \chi (\tau){e^{i\lambda \tau }}{\cos(\tau \sqrt { - {\Delta _g}})}(x,y)d\tau+O(\lambda^{-N}).
\end{aligned}
\]
Let \[K(t,s)=\frac{1}
{{\pi }}\int_{\mathbb{R}} \hat \chi (\tau){e^{i\lambda \tau }}{\cos(\tau \sqrt { - {\Delta _g}})}(\gamma(t),\gamma(s))d\tau.\]
Since ${\rm supp}\hat{\chi}\subset[-1,1]$ and the injectivity radius of $(M,g)$ is sufficiently large, we may use the Hadamard parametrix to estimate $K(t,s)$. Indeed, as in \cite[(2.9)]{chensogge} we can write
\[\begin{aligned}\cos(\tau\sqrt{-\Delta_g})(\gamma(t),\gamma(s))&=(2\pi)^{-3}w(t,s)\int_{\mathbb{R}^3}e^{id_g(\gamma(t),\gamma(s))\xi_1}\cos(\tau|\xi|)d\xi\\
&+\sum_{\pm}\int_{\mathbb{R}^3}e^{id_g(\gamma(t),\gamma(s))\xi_1}e^{\pm i\tau|\xi|}a_\pm(\tau,t,s;|\xi|)d\xi+R(t,s),
\end{aligned}
\]
where \begin{equation}\label{wts}w\in C^\infty[-\tfrac12,\tfrac12]^2,\ w(t,t)=1,\ |t|\le \tfrac12,\end{equation}
\[|\partial_\tau^ja_\pm(\tau,t,s;|\xi|)|\le C_j(1+|\xi|)^{-2}, \ j=0,1,2,...,\]
\[|R(t,s)|\le C.\]
It is not difficult to see that the contributions of the second term and the remainder term to $K(t,s)$ is $O(1)$, since
\begin{align*}\Big|\int_{\mathbb{R}^3}\int_{\mathbb{R}} \hat\chi(\tau)e^{id_g(\gamma(t),\gamma(s))\xi_1}a_\pm(\tau,t,s;|\xi|)e^{i\lambda\tau}e^{\pm i\tau|\xi|}d\tau d\xi\Big|\\
\le C_N\int_{\mathbb{R}^3}(1+|\lambda-|\xi|)^{-N}(1+|\xi|)^{-2}d\xi\ls 1.\end{align*}
 Then we have
\[\begin{aligned}K(t,s)&=(2\pi)^{-4}w(t,s)\int_{\mathbb{R}}\int_{\mathbb{R}^3}\hat\chi(\tau)e^{id_g(\gamma(t),\gamma(s))\xi_1}e^{i\tau(\lambda-|\xi|)}d\xi d\tau+O(1)\\
&=(2\pi)^{-3}w(t,s)\int_{\mathbb{R}^3}\chi(\lambda-|\xi|)e^{id_g(\gamma(t),\gamma(s))\xi_1}d\xi+O(1)\\
&=\frac{w(t,s)}{2\pi^2}\int_0^\infty\chi(\lambda-r)\frac{\sin(d_g(\gamma(t),\gamma(s))r)}{d_g(\gamma(t),\gamma(s))}rdr+O(1)\\
&=\frac{w(t,s)}{2\pi^2}\int_0^\infty\chi(\lambda-r)\frac{\sin(d_g(\gamma(t),\gamma(s))r)}{|t-s|}rdr+O(\lambda)\ \ \ (by \ Lemma\ \ref{dlemma})\\
&=\frac1{2\pi^2}\int_0^\infty\chi(\lambda-r)\frac{\sin(d_g(\gamma(t),\gamma(s))r)}{|t-s|}rdr+O(\lambda)\ \ \ (by\ \eqref{wts}).
\end{aligned}\]
Let \[T_\lambda f(t)={\rm p.v.}\int_{-\frac12}^\frac12\frac{e^{i\lambda\phi(t,s)}}{t-s}f(s)ds,\]
where $\phi(t,s)=d_g(\gamma(t),\gamma(s)){\rm sgn}(t-s)$.

Therefore, we will have \eqref{cmpmain}
if we can prove
\begin{lemma}\label{lemma5}
  $T_\lambda$ is uniformly bounded on $L^2[-\frac12,\frac12]$.
\end{lemma}
To prove this lemma, we need to exploit the curvature assumption on the curve $\gamma$.
\subsection{Proof of Lemma \ref{lemma5}}
 Let $\beta\in C_0^\infty(\mathbb{R})$ be an even function satisfying \[{\rm supp}\beta\subset [-2,-\tfrac12]\cup[\tfrac12,2],\ \sum_{j=0}^\infty \beta(2^jx)=1,\ x\in[-1,1].\] Let $T_\lambda=\sum_{2^j\le\lambda^{1/3}}T_\lambda^j+R_\lambda$, where
\[T_\lambda^j f(t)=\int \frac{e^{i\lambda\phi(t,s)}}{t-s}\beta(2^j(t-s))f(s)ds,\ j=0,1,2,...,\]
\[R_\lambda f(t)=\sum_{2^j\ge\lambda^{1/3}}T_\lambda^jf(t)={\rm p.v.}\int \frac{e^{i\lambda\phi(t,s)}}{t-s}\eta(\lambda^\frac13(t-s))f(s)ds,\ \eta\in C_0^\infty(\mathbb{R}).\]
First, we show that $R_\lambda$ is uniformly bounded on $L^2(\mathbb{R})$ by comparing it with a convolution operator. Indeed, the boundedness of the Fourier multiplier
\[\Big|\Big(e^{i\lambda x}\eta(\lambda^\frac13x){\rm p.v.}\frac1x\Big)^\wedge(y)\Big|\lesssim\|\hat\eta\|_1\approx 1,\]\
implies the uniform $L^2\to L^2$ bound of the convolution operator:
\[f \longmapsto {\rm p.v.}\int \frac{e^{i\lambda(t-s)}}{t-s}\eta(\lambda^\frac13(t-s))f(s)ds.\]
Then the uniform bound $\|R_\lambda\|_{L^2\to L^2}\ls 1$ follows from
\[\int \frac{|e^{i\lambda\phi(t,s)}-e^{i\lambda(t-s)}|}{|t-s|}|\eta(\lambda^\frac13(t-s))|ds\lesssim \int_{|t-s|\le\lambda^{-1/3}} \lambda|t-s|^2ds\lesssim 1,\] and Young's inequality.

Next, we need to handle $T_\lambda$. Young's inequality clearly gives $\|T_\lambda^j\|_{L^2\to L^2}\ls 1$, which is not precise enough to imply $\|T_\lambda\|_{L^2\to L^2}\ls 1$. We must refine it for $ 2^j\le\lambda^\frac13$ by using the curvature condition. Recall that \[\phi(t,s)=(t-s)-c(t)(t-s)^3+d(t,t-s)(t-s)^4.\] Direct calculation gives
 \[\phi_{ts}''(t,s)=6c(t)(t-s)+O(|t-s|^2),\]
 \[\phi_{tts}'''(t,s)=6c(t)+O(|t-s|),\]
 \[\phi_{ttts}''''(t,s)=O(1).\]
 Let $\beta^+(2^j(t-s))=\beta(2^j(t-s))\textbf{1}_{t>s}$ and $\beta^-(2^j(t-s))=\beta(2^j(t-s))\textbf{1}_{t<s}$, which are smooth functions and their sum is equal to $\beta(2^j(t-s))$. In the following, we only deal with the operators with $\beta^+$, and the same argument also works for those  with  $\beta^-$.
 Let \[ \tilde T_\lambda^{j}f(t)=\int e^{i\lambda\phi(t,s)}a(t,s)f(s)ds,\]
where $a(t,s)=(t-s)^{-1}\beta^+(2^j(t-s))$.  The kernel of $\tilde T_\lambda^j \tilde T_\lambda^{j*}$ is
\[\begin{aligned}K(s,s')&=\int {{e^{i\lambda (\phi (t,s) - \phi (t,s'))}}a(t,s)\overline {a(t,s')} dt}\\
&:=\int {{e^{i\lambda (s-s')\varphi(t,s,s')}}\tilde a(t,s,s')} dt.\end{aligned} \]
Since $|t-s|\approx2^{-j}$, we have $|\phi_{st}''|\approx 2^{-j}$ on the support of $a(t,s)$, by the properties of $c$ and $d$. Then using the mean value theorem, we get
\begin{equation}\label{est1}|\varphi'_t(t,s,s')|=|\phi''_{st}(t,s'')|\approx 2^{-j},\end{equation}
where $s''$ is between $s$ and $s'$. Here we use the fact that $t-s\approx t-s'\approx 2^{-j}$ on the support of $a(t,s)$.  Moreover, direct calculation shows
\begin{equation}\label{est2}|\partial_t^k\tilde a|\lesssim 2^{(k+2)j},\ |\partial_t^k\phi_{st}''|\lesssim 1,\ k=0,1,2.\end{equation}
One the one hand, by using \eqref{est1} and \eqref{est2} and the support property of $a(t,s)$, we have  \[|K(s,s')| \le \int {|a(t,s)||a(t,s')|dt} \lesssim 2^{j}.\] One the other hand, we can do integration by parts twice to get
\begin{align*}
  |K(s,s')| &\le {(\lambda |s - s'|)^{ - 2}}\int {\left| {\frac{\partial }
{{\partial t}}\left( {\frac{1}
{{\varphi'_t}}\frac{\partial }
{{\partial t}}\left( {\frac{{\tilde a}}
{{\varphi'_t}}} \right)} \right)} \right|dt}  \\
   &\lesssim (\lambda |s - s'|)^{ - 2}2^{5j}.
\end{align*}
Then \[\int |K(s,s')|ds\lesssim \int_0^{2^{-j}} \min\{2^j,  (\lambda s)^{ - 2}2^{5j}\}ds\lesssim \lambda^{-1}2^{3j}\]
 and Young's inequality gives $\|\tilde T_\lambda^j\|_{L^2\to L^2}\lesssim \lambda^{-\frac12}2^{\frac32j}$ for $2^j\le \lambda^\frac13$. Therefore, the uniform bound $\|T_\lambda\|_{L^2\to L^2}\ls 1$ follows from summing a geometric series. So the proof is complete.
\begin{remark}
It is worth mentioning that one may slightly modify the argument above to give a simpler different proof of the main theorems by Burq-G\'erard-Tzvetkov \cite{burq} about the eigenfunction restriction estimates for curves on Riemannian surfaces.
\end{remark}
\section{Further Discussions}
Let $\phi(t,s)=d_g(\gamma(t),\gamma(s))\sgn(t-s)$ as above. We say that the curve $\gamma$ satisfies the finite type condition if  the mixed derivative $\phi_{ts}''(t,s)$ does not vanish to infinite order. We recall the following results of Pan \cite{pan91}.
\begin{proposition}Suppose that $T_\lambda$ is defined in \eqref{Tlambda}, and $\phi_{st}''$ does not vanish to infinite order on ${\rm supp}\ a$. Then the operators $T_\lambda$ are uniformly bounded on $L^p$ to itself, for $1<p<\infty$.
\end{proposition}
\begin{corollary}If the phase $\phi(t,s)$ is real-analytic, then $T_\lambda$ are uniformly bounded on $L^p$, for $1<p<\infty$.
\end{corollary}
Following the argument before, we immediately have
\begin{corollary}\label{panco}
Let $(M,g)$ be a compact smooth 3-dimensional Riemannian manifold. Let $\gamma\subset M$ be a fixed unit-length curve satisfying the finite type condition. There exists a constant $C>0$ such that for any
$e_\lambda$, we have
\begin{equation}\label{key2}\|e_\lambda\|_{L^2(\gamma)}\le C(1+\lambda)^\frac12\|e_\lambda\|_{L^2(M)}.\end{equation}
\end{corollary}
\begin{corollary} If $(M,g)$ and $\gamma$ are both real-analytic, then \eqref{key2} holds.
\end{corollary}
 This bound \eqref{key2} is sharp and saturated by zonal functions on the sphere $S^3$, see \cite[Section 6.2]{burq}. One may also have \eqref{key2} if the finite type condition is replaced by other different conditions in \cite{seeger94}, \cite{cp99}. The following problems are still open.
 \begin{problem}\label{pb1}{\rm Is \eqref{key2} true for \emph{any} smooth curve $\gamma$ on a general 3-dimensional manifold? }
 \end{problem}
  \begin{problem}\label{pb3}{\rm Is \eqref{key0} true for some ``curved'' submanifolds of codimension 2 when $n\ge4$?
  }
   \end{problem}
 \begin{problem}\label{pb2}{\rm
   If the phase $\phi(t,s)=d_g(\gamma(t),\gamma(s))\sgn(t-s)$ is the distance function restricted to a general smooth curve $\gamma$ on a general 3-dimensional manifold, then is the operator $T_\lambda$ (defined in \eqref{Tlambda})  uniformly bounded on $L^2(\mathbb{R})$?}
 \end{problem}

If the phase function can be arbitrary, the answer to Problem \ref{pb2} is No. See the work by Nagel-Wainger \cite[Theorem 4.1]{nw} for an explicit (translation invariant) counterexample $\phi(t,s)=\psi(t-s)$. Indeed, Nagel-Wainger constructed a ``flat'' function $\psi(t)$ such that
\[m(x, y)={\rm p . v .} \int_{-1}^{1} e^{i(x t+y \psi(t))} \frac{d t}{t}\]
is unbounded on $\mathbb{R}^2$. Then the Hilbert transform along the curve $(t,\psi(t))$ is unbounded on $L^2(\mathbb{R}^2)$. So by \eqref{ht} we can see that $T_\lambda$ is not uniformly bounded on $L^2(\mathbb{R})$. Furthermore, the following more explicit problem is still open.
\begin{problem}\label{pb4}{\rm Let $\lambda\in\mathbb{R}$ and $\psi \in C^\infty(\mathbb{R})$. Is
\[m(\lambda)={\rm p . v .} \int_{-1}^{1} e^{i\lambda\psi(t)} \frac{d t}{t}\]
bounded on $\mathbb{R}$?}
  \end{problem}
 Pan \cite[Lemma 2.3]{pan1993} proved that it is bounded if $\psi(t)$ does not vanish to infinite order at $t=0$. However, if we replace the interval $[-1,1]$ in the integral by $\mathbb{R}$, the answer is No. A simple counterexample can be a smooth function vanishing in $(-\infty, -1]$ and    equal  to 1 in $[1,\infty)$. Stein-Wainger \cite{sw1970} showed that if $\psi(t)$ is a polynomial of degree $d$, then
 \[\Big|{\rm p . v .} \int_{\mathbb{R}} e^{i\lambda\psi(t)} \frac{d t}{t}\Big|\le C_d\]
 where the constant $C_d$   is independent of $\lambda$ and the coefficients of the polynomial. The best constant $C_d\approx \log d$, see \cite{pa2008} and references therein.

\bibliography{newref}

\bibliographystyle{plain}

\end{document}